\let\originaleqref=\eqref
\numberwithin{equation}{section}
\renewcommand{\eqref}{equation~\originaleqref}
\newcommand{\figref}[1]{Figure~\ref{#1}}
\newtheorem{theorem}{THEOREM}[section]
\newcommand{\oneplot}[1]{
    \centering\includegraphics[width=0.48\textwidth]{#1}
}
\newcommand{\twoplot}[2]{
    \centering
    \oneplot{#1}
    \hfill
    \oneplot{#2}
}
\renewcommand{\div}{\operatorname{div}\,}
\newcommand{\R}{\mathbb{R}}
\renewcommand{\Re}{\operatorname{Re}} 
\newcommand{\Of}{\Omega_F} 
\newcommand{\Os}{\Omega_S} 
\newcommand{\bOf}{\bar{\Omega}_F} 
\newcommand{\bOs}{\bar{\Omega}_S} 
\newcommand{\twoparam}[2]{{#1}_{#2}} 
\newcommand{\de}{\partial}
\newcommand{\viscositynum}{m}
\newcommand{\volumenum}{n}
\newcommand{\viscosity}{\viscositynum}
\newcommand{\volume}{\volumenum}
\newcommand{\mixed}{\viscosity \lor \volume}
\newcommand{\um}{\twoparam{u}{\viscosity}} 
\renewcommand{\pm}{\twoparam{p}{\viscosity}} 
\newcommand{\vm}{\twoparam{v}{\viscositynum}} 
\newcommand{\un}{\twoparam{u}{\volume}} 
\newcommand{\pn}{\twoparam{p}{\volume}} 
\newcommand{\umn}{\twoparam{u}{\mixed}} 
\newcommand{\vmn}{\twoparam{v}{\mixed}}
\newcommand{\pmn}{\twoparam{p}{\mixed}} 
\newcommand{\app}{\text{\textsc{app}}}
\newif\ifdraft
\title{
Comparative Analysis of Obstacle Approximation Strategies for the Steady Incompressible Navier-Stokes Equations}
\author{Piotr Krzyżanowski$^*$, Sadokat Malikova$^*$,  Piotr Bogusław Mucha$^*$, Tomasz Piasecki\thanks{Institute of Applied Mathematics and Mechanics, University of Warsaw, ul. Banacha 2, 02-097 Warszawa, Poland} \thanks{Corresponding author: tpiasecki@mimuw.edu.pl}}
\begin{document}

\maketitle 



\medskip

\begin{abstract}
This paper aims to compare and evaluate various obstacle approximation techniques employed in the context of the steady incompressible Navier-Stokes equations. Specifically, we investigate the effectiveness of a standard volume penalization approximation and an approximation method utilizing high viscosity inside the obstacle region, as well as their composition. Analytical results concerning the convergence rate of these approaches are provided, and extensive numerical experiments are conducted to validate their performance.
\end{abstract}

\noindent
{\bf MSC:} 35Q30, 76D05\\[5pt]
\noindent
{\bf Keywords:} Navier-Stokes Equations, obstacle, volume penalization, viscosity penalization, simulations

\subsubsection*{Acknowledgements} The third (PBM) and fourth (TP) author have been partly supported by the Narodowe Centrum Nauki (NCN) grant No 2022/45/B/ST1/03432 (OPUS).

\newpage 

\section{Introduction} 
Let us consider a domain $\Omega\subset \R^d$ with a physically reasonable dimension $d=2$ and $3$, in which a solid obstacle $\Os$ is immersed. Let us assume that the remaining part of the region, $\Of=\Omega\setminus \bOs$, is occupied by a viscous incompressible fluid, whose motion is governed by the Navier--Stokes equations:
\begin{equation}
\label{NS}
\begin{split}
 - \nu\Delta u  + (u\cdot \nabla)u + \nabla p  &= f \quad \text{ in } \Of,\\
\div u&=0 \quad \text{ in } \Of,\\
u &= 0 \quad \text{ on } {\partial \Of}.
\end{split}
\end{equation}
Here, $u:\Of\to \R^d$ is the velocity of the fluid and $p:\Of\to \R$ denotes the pressure. Positive constant $\nu$ is the kinematic viscosity and $f:\Of\to \R^d$ is the external force. We assume a no-slip boundary condition on the fluid--solid interface $\Sigma = \bOf \cap \bOs$ (cf.~\figref{domain})
and, for simplicity, the same homogeneous Dirichlet boundary condition on the fluid velocity~$u$ on other parts of $\Of$ as well. We will refer to \eqref{NS} as the ``real obstacle'' problem with constant viscosity.
%

The central inquiry under consideration in this paper revolves around the effectiveness of approximating the problem (\ref{NS}) through the utilization of a suitable system defined over the entire domain $\Omega$, rather than confining it solely to $\Of$.
\begin{figure}
\centering
\includegraphics[width=0.32\textwidth]{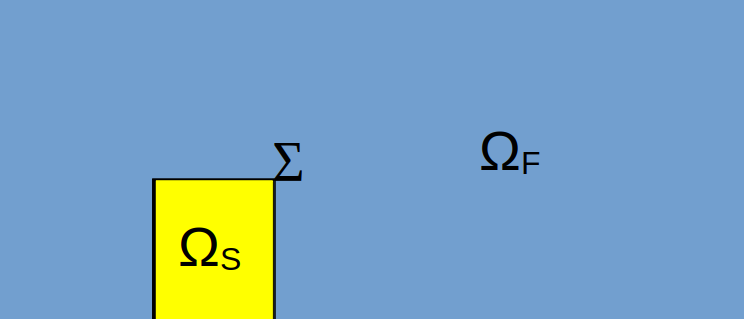}
\caption{Decomposition of $\Omega$ into fluid domain $\Of$ and solid obstacle $\Os$. The interface between the fluid and the solid is denoted by $\Sigma$ and marked with a solid black line.}
\label{domain}
\end{figure}
The proposed concept is rather straightforward: within the solid region $\Os$, a penalization term is introduced, which, for a pertinent parameter value, renders the system comparable to the original (\ref{NS}). The volume penalization method stands out as the most widely known and scrutinized technique in this regard, as it incorporates a friction term over the set $\Os$. Consequently, as the friction parameter tends towards infinity, we attain, at least formally, the equivalent of (\ref{NS}). Nonetheless, a notable concern arises regarding the convergence of such an approach, as it results in the diminution of the $L^2$-norm of the solution at the obstacle. Consequently, certain phenomena associated with the shape of the body immersed in the fluid may not be accurately captured. To address this issue, we undertake a more comprehensive examination of the viscosity penalization method. In this case, a high viscosity coefficient is imposed inside the region $\Os$, which ultimately yields the equivalent of (\ref{NS}) in the limit. From the perspective of weak solutions, this method exhibits a more natural behavior. However, it necessitates meticulous attention due to the intricate convergence analysis that is sought.

Consequently, we investigate three distinct types of such approximations (where, with a slight abuse of notation, $f:\Omega\to \R^d$ represents the force term of \eqref{NS}, extended by zero outside $\Of$).





\paragraph{Volume penalization}  One popular approach augments the momentum equation with a penalization term, aiming at slowing down the fluid inside the obstacle region. With this approach, one aims at suppressing the motion of the fluid in the obstacle region by introducing inside $\Os$ damping, controlled by (large enough) parameter $n\geq 0$, so that the approximate solution $(\un,\pn)$ satisfies
\begin{equation} 
\label{NSvol}
\begin{split}
 - \nu\Delta \un  + (\un\cdot \nabla)\un + \nabla \pn + \eta_n \un &= f \quad \text{ in } \Omega,\\
\div \un&=0 \quad \text{ in } \Omega,\\
\un &= 0 \quad \text{ on } {\partial \Omega},
\end{split}
\end{equation}
where $\eta_n$ is a nonnegative piecewise constant function depending on a penalty parameter $n\geq 0$, 
$$
\eta_n(x) = \begin{cases}
0, &\quad x\in\Of,\\
n, &\quad x\in\Os.\\
\end{cases}
$$

Since the method essentially treats the obstacle as a porous medium whose permeability coefficient is proportional to $1/n$, it is sometimes called Brinkman penalization; in other papers it is referred to as the $L^2$ penalization. Here we mention a number of works based on a volume penalization method such as \cite{jorge}, \cite{angot2}, \cite{angot},\cite{kolomenski}. In works of Angot \cite{angot2}, and Angot, Bruneau and Fabrie  \cite{angot}, authors established the strong convergence of the solutions and derived some error estimates for the approximate and exact problem in the steady Stokes system and unsteady Navier-Stokes with homogeneous boundary data. The further analysis of error estimates for steady systems with inhomogeneous boundary conditions were done recently by Aguayo and Lincopi \cite{jorge}.

\paragraph{Viscosity penalization} Similar effect may be realized by introducing a very large artificial viscosity in $\Os$ instead --- though this approach is viable only if the obstacle is pinned to the boundary of $\Omega$. The approximate solution $(\um,\pm)$ is defined on $\Omega$ by the Navier--Stokes system
\begin{equation}
\label{NSvisc}
\begin{split}
 - \div(\mu_m\nabla \um)  + (\um\cdot \nabla)\um + \nabla p_m  &= f \quad \text{ in } \Omega,\\
\div \um&=0 \quad \text{ in } \Omega,\\
\um &= 0 \quad \text{ on } {\partial \Omega},
\end{split}
\end{equation}
where $\mu_m$ is a positive, piecewise constant function depending on a penalty parameter $m>0$,
$$
\mu_m(x) = \begin{cases}
\nu, &\quad x\in\Of,\\
m \, \nu, &\quad x\in\Os.\\
\end{cases}
$$

The viscosity penalization method was developed by Hoffmann and Starovoitov \cite{hoffman}, and San Martin \textit{et al} \cite{sanmartin}. This method was used in works of Wr\'{o}blewska-Kami\'{n}ska \cite{aneta},  and Starovoitov \cite{starovoitov} to construct solutions to systems describing motion of rigid bodies immersed in incompressible fluid. 
Recently, in \cite{malikova},   a higher regularity result was proved for weak solutions to (\ref{NSvisc}). 

\paragraph{Mixed penalization} The third possibility that we will consider here is simply a combination of the two above mentioned  approaches, leading to a system of the form 
\begin{equation} 
\label{NSmix}
\begin{split}
 - \div(\mu_m\nabla \umn)  + (\umn\cdot \nabla)\umn + \nabla \pmn + \eta_n \umn &= f \quad \text{ in } \Omega,\\
\div \umn&=0 \quad \text{ in } \Omega,\\
\umn &= 0 \quad \text{ on } {\partial \Omega},
\end{split}
\end{equation}
where $\nu_m,\eta_n$ are defined as above and $(\umn,\pmn)$ denote the approximate solution. Clearly, this formulation covers the previous ones as special cases: if $n=0$, then $\umn\equiv\um$
and similarly, for $m=1$ there holds $\umn\equiv\un$. 
Penalization of mixed type has been considered among others in \cite{angot2}, \cite{angot} for $m=n$;  the second includes also penalization of the time derivative. The authors provide theoretical results on the rate of convergence and their numerical validation. 


\smallskip

The primary objective of our study is to advance in this research direction by offering analytical insights into the convergence rate of the aforementioned obstacle approximation methods, including their combined approach. Our analysis encompasses both two-dimensional and three-dimensional scenarios.

To validate the theoretical findings, we conduct a comprehensive numerical investigation to assess the convergence rate in the two-dimensional case. The numerical experiments are designed to encompass
all above mentioned penalization approaches and diverse obstacle shapes as well, ensuring a robust evaluation of the proposed approximation methods.

\begin{figure}
\twoplot{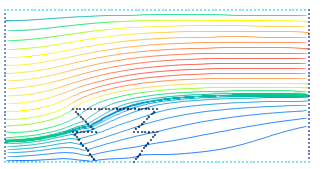}{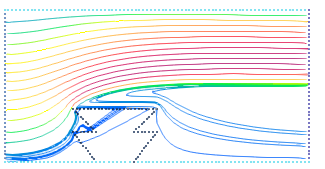}\\
\twoplot{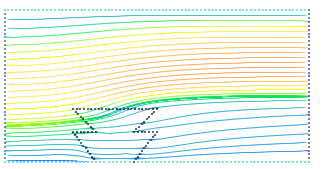}{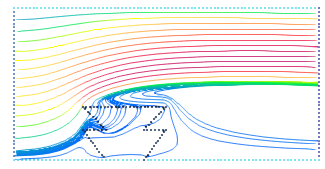}
\caption{Streamlines plots, with color corresponding to velocity magnitude, for viscosity penalization (top row)  and for volume penalization (bottom row) and varying penalty parameter. Panels on the left correspond to penalty parameter $m=n=10^3$; those on the right, to $m=n=10^5$. (For details regarding the experimental setting, see Section~\ref{sec:experim:taipei})}
\label{showcase}
\end{figure}

By undertaking this analytical and numerical analysis, we aim to contribute to a deeper understanding of the performance and efficacy of these approximation techniques in capturing the behavior of fluid flow around obstacles. This research has the potential to inform and guide practitioners in selecting the most suitable approach based on the Reynolds number and the specific geometric characteristics of the obstacles encountered in practical applications.


As a practical application, we present a demonstrative example (\figref{showcase}) to showcase the performance of two  approximation methods considered here. 
From this experiment it follows that while viscosity penalization with $m=10^5$ already results in a quite plausible flow, for the same penalty value $n=10^5$ one still gets visible non-physical artifacts when using volume penalization. From our subsequent analysis, it becomes evident that the approach based on the viscous penalization method exhibit favorable error characteristics. Further experiments, presented in Section \ref{sec:num}, 
show that a combination of both methods --- i.e., (\ref{NSmix}) --- leads to further improvement of the accuracy.  

This observation highlights the effectiveness and suitability of obstacle approximation techniques based on the incorporation of high viscosity within the obstacle region. Numerical results provide evidence supporting the practical viability of these methods in accurately capturing fluid flow behavior around obstacles. Clearly, to develop a robust numerical solver based on the penalization idea, one would need to put a significant amount of further effort. In particular, the ill-conditioning resulting from very high contrast in the coefficients would require one to employ an efficient preconditioner; otherwise the overall performance of the solver will suffer. Several promising preconditioners for high contrast Stokes equations have recently been developed (see \cite{Wichrowski2022AMM} for an example) and may possibly be adapted to fit the present framework. However, in this paper we do not investigate such practical issues, and use numerical simulations only as a means to verify the quality of the approximation and sharpness of theoretical estimates.

The findings of this paper contribute to the understanding of 
penalty-based approximation methods, offering valuable insights for researchers and practitioners seeking to tackle real-world fluid flow problems involving obstacles. The outcomes of this study can assist in making informed decisions regarding the selection of appropriate approximation strategies for specific applications.

\subsection{Weak formulation}

We shall assume that $\Omega$, $\Of$ and $\Os$ are bounded, open domains in $\R^d$, and that their boundaries are sufficiently regular. As mentioned above, $\bar{\Omega} = \bOf \cup \bOs$, while  $\Of \cap \Os = \emptyset$. We also assume that the interface $\Sigma = \bOf \cap \bOs$ has a positive measure.

As we are working with weak solutions, we shall recall the weak formulation of the original and approximate problems. For this 
purpose we define
\begin{equation} \label{def:V} 
V_F = \{v \in H^1_0(\Of): \div v=0 \} \qquad\text{and}\qquad  
V = \{v \in H^1_0(\Omega): \div v=0 \}.
\end{equation}
Assume $f \in V_F'$. A weak solution to (\ref{NS}) is $u \in V_F$ such that
\begin{equation}\label{weak}
  \int_{\Of}  (u\cdot \nabla) u\, v +\\
\nu \nabla u:\nabla v \,dx = \langle f,v \rangle_{V_F',V_F} 
\end{equation} 
holds for any $v \in V_F$, where $\langle\cdot,\cdot \rangle$ is the duality pairing.


In order to define weak solutions to approximate problems we assume $f \in V'$. Then a weak solution to the viscosity penalization  \eqref{NSvisc} is $\um \in V$ such that
\begin{equation}\label{weak_visc}
  \int_{\Omega}  (\um\cdot \nabla )\um\, v+\\
\mu_m{\nabla \um}:\nabla v\,dx = \langle f,v \rangle_{V',V} 
\end{equation}
holds for any $v \in V$. 

Next, by a weak solution to the volume penalization problem (\ref{NSvol}) we mean 
$u_n \in V$ s.t.
\begin{equation}\label{weak_fric}
  \int_{\Omega}  (\un\cdot \nabla)\un\, v + \\
\nu {\nabla \un}:\nabla v +  \eta_n \un v \,dx = \langle f,v \rangle_{V',V} 
\end{equation}
holds for every $v \in V$.

Finally, a weak solution to the mixed penalization system \eqref{NSmix} is $\umn \in V$ such that 
\begin{equation}\label{weak_mix}
   \int_{\Omega}  (\umn\cdot \nabla)\umn v +\\
\mu_m{\nabla \umn}:\nabla v +  \eta_n \umn v \,dx = \langle f,v \rangle_{V',V} 
\end{equation}
holds for every $v \in V$.

\section{Theoretical bounds on the convergence rate}

\renewcommand{\um}{\twoparam{u}{\viscositynum}} 
\renewcommand{\un}{\twoparam{u}{\volumenum}} 

In this section we prove convergence and upper bounds on the approximation error with respect to penalty parameters for all three approximation schemes introduced above.

\subsection{Volume penalization}

First we recall the error estimates for the approximation of the flow by means of volume penalization. For inflow condition on the velocity and $f=0$ they have been proved recently in \cite{jorge} (Theorems 5 and 6). In order to understand the statement of results we shall recall that for stationary version of the Navier-Stokes system (\ref{NS}) we are not able to require the uniqueness of solutions. This feature holds only for some restrictive cases like smallness of the external force. For that reason in the large data case, our approximation defines the original solution on a certain subsequence. 
The proof in our setting requires only minor modification, therefore we skip it.

\begin{theorem} \label{thm:vol}
Assume $f \in H^{-1}(\Omega)$ and $\Omega$ is a Lipschitz domain. Let  $\un$ denote a  weak solution to \eqref{NSvol}. Then 
\begin{equation} \label{vol:1.1}
\|\un\|_{L^2(\Os)} \leq Cn^{-1/2}. 
\end{equation}
Moreover, for a subsequence $u_{n_k}$, denoted in what follows $\un$,  
\begin{align}
\lim_{n \to \infty} \|u-\un\|_{H^1(\Omega)}=0 \label{vol:1.2},
\end{align}
where $u$ is a weak solution to \eqref{NS}.
Assuming additionally that $\de \Omega \in C^{2}$,\\ $f \in L^2(\Omega)$ and $\|f\|_{H^{-1}(\Omega)}$  is small enough with respect to $\nu$ we have 
\begin{align}
&\|\un\|_{L^2(\Os)} \leq Cn^{-3/4}, \label{vol:3} \\[3pt]
&\|u-\un\|_{H^{1}(\Of)} \leq Cn^{-1/4}. \label{vol:4}    
\end{align}
\end{theorem}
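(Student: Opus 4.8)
The plan is to base everything on the energy identity obtained by testing the weak formulation \eqref{weak_fric} against admissible divergence-free functions, and then to upgrade the resulting estimates through an interface/boundary-layer analysis. First I would establish the uniform bound and the basic rate \eqref{vol:1.1}: testing \eqref{weak_fric} with $v=\un$ annihilates the convective term, since $\int_\Omega(\un\cdot\nabla)\un\cdot\un=0$ for $\un\in V$, leaving
\[
\nu\|\nabla\un\|_{L^2(\Omega)}^2+n\|\un\|_{L^2(\Os)}^2=\langle f,\un\rangle\le\|f\|_{H^{-1}(\Omega)}\|\nabla\un\|_{L^2(\Omega)},
\]
where I used Poincaré's inequality on $V$. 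This yields a bound on $\|\nabla\un\|_{L^2(\Omega)}$ uniform in $n$ and, reinserting it, $n\|\un\|_{L^2(\Os)}^2\le C$, which is \eqref{vol:1.1}.

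For the convergence \eqref{vol:1.2} I would extract from the uniform $H^1$ bound a subsequence with $\un\rightharpoonup u_\ast$ weakly in $H^1_0(\Omega)$ and strongly in $L^2(\Omega)$ (Rellich). Since $\|\un\|_{L^2(\Os)}\to0$, the limit satisfies $u_\ast=0$ on $\Os$; being in $H^1_0(\Omega)$ and divergence-free, its restriction to $\Of$ lies in $V_F$ with vanishing trace on $\Sigma$. Passing to the limit in \eqref{weak_fric} with test functions $v\in V_F$ extended by zero (so that the penalty term drops out) identifies $u_\ast|_{\Of}$ as a weak solution $u$ of \eqref{weak}. Strong convergence then follows from convergence of energies: testing \eqref{weak_fric} with $\un$ and \eqref{weak} with $u$ gives $\nu\|\nabla\un\|^2_{L^2(\Omega)}\le\langle f,\un\rangle\to\langle f,u\rangle=\nu\|\nabla u_\ast\|^2_{L^2(\Omega)}$, which together with weak lower semicontinuity forces $\|\nabla\un\|_{L^2(\Omega)}\to\|\nabla u_\ast\|_{L^2(\Omega)}$ and hence $\un\to u_\ast$ in $H^1_0(\Omega)$.

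The heart of the argument is the improved rate under the additional hypotheses. Smallness of $\|f\|_{H^{-1}(\Omega)}$ guarantees uniqueness (removing the need for subsequences) and lets the convective remainder be absorbed, while $\de\Omega\in C^2$ and $f\in L^2(\Omega)$ give $u\in H^2(\Of)$, $p\in H^1(\Of)$ by elliptic regularity. I would let $\tilde u\in V$ be the extension of $u$ by zero to $\Os$ and set $w=\un-\tilde u\in V$. Testing the difference of the penalized equation and the strong form of \eqref{NS} (integrated over $\Of$ against $w$), treating the convective terms by the standard skew-symmetry cancellations and absorbing them via the smallness of $f$, yields
\[
\nu\|\nabla w\|_{L^2(\Omega)}^2+n\|\un\|_{L^2(\Os)}^2\le C\,\|w\|_{L^2(\Sigma)},
\]
the right-hand side coming from the interface stress terms $-\nu\int_\Sigma(\de_{n}u)\cdot w+\int_\Sigma p\,(w\cdot n_\Sigma)$, bounded through the $H^2$/$H^1$ regularity of $(u,p)$ and the trace theorem (note $w|_\Sigma=\un|_\Sigma$, since $u=0$ on $\Sigma$).

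The final and most delicate step is to close this estimate. I would control the interface trace from the obstacle side by the multiplicative trace inequality $\|w\|_{L^2(\Sigma)}^2\le C\|\un\|_{L^2(\Os)}\|\un\|_{H^1(\Os)}$, giving $\|w\|_{L^2(\Sigma)}\le C\,b^{1/2}(b+a)^{1/2}$ with $a:=\|\nabla w\|_{L^2(\Omega)}$ and $b:=\|\un\|_{L^2(\Os)}$. Inserting this and using Young's inequality to absorb the $a^2$ contribution leaves $\tfrac{\nu}{2}a^2+nb^2\le C(b^{2/3}+b)$; since $b\to0$, the term $nb^2\le Cb^{2/3}$ forces $b\le Cn^{-3/4}$, which is \eqref{vol:3}, and then $a^2\le Cb^{2/3}\le Cn^{-1/2}$ gives $\|u-\un\|_{H^1(\Of)}\le\|w\|_{H^1(\Omega)}\le Ca\le Cn^{-1/4}$, which is \eqref{vol:4}. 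I expect this coupling between the boundary-layer mass $\|\un\|_{L^2(\Os)}$ and the interface trace, realized through the multiplicative trace inequality, to be the main obstacle, as it is precisely what breaks the naive $n^{-1/2}$ scaling and produces the sharp exponents $-3/4$ and $-1/4$.
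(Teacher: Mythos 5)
Your argument is correct and is essentially the proof the paper intends: the paper omits the proof of Theorem~\ref{thm:vol}, deferring to \cite{jorge}, and the strategy there --- the energy identity giving \eqref{vol:1.1}, weak-limit extraction plus convergence of energies for \eqref{vol:1.2}, and, for the improved rates, an interface stress term controlled via the multiplicative trace inequality $\|\un\|_{L^2(\Sigma)}^2\le C\|\un\|_{L^2(\Os)}\|\un\|_{H^1(\Os)}$ --- is exactly yours, and also mirrors the paper's own proofs of Theorems~\ref{thm:visc} and~\ref{thm:mix}. The only caveat (shared with the paper's statement) is that the $H^2(\Of)\times H^1(\Of)$ regularity of $(u,p)$ you invoke requires $\Sigma$, not merely $\partial\Omega$, to be $C^2$.
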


\subsection{Viscosity penalization}

It turns out that the approximation by means of viscosity penalization gives rise to faster convergence, however we have to assume that the obstacle touches the boundary of $\Omega$ to ensure the convergence of the approximate solution on the obstacle domain to zero (otherwise we would only obtain a constant flow). A result of this kind is expected, but to our knowledge has not been proved so far in the stationary case. It is given in the main result of this paper, which reads 
\begin{theorem} \label{thm:visc}
Assume $\Omega$ and $\Of$ are Lipschitz domains and $f \in H^{-1}(\Omega)$. 
Assume moreover that and
\begin{equation} \label{touching}
int\,\overline{\de \Os \cap \de \Omega} = \de \Os \cap \de \Omega \quad {\rm and} \quad  \lambda_S(\de \Os \cap \de \Omega)>0,
\end{equation}
where $\lambda_S$ is the surface Lebesgue measure. 
Let $u$ be a weak solution to \eqref{NS} and $\um$ a weak solution to (\ref{NSvisc}). Then
\begin{equation}
\|\um\|_{H^1(\Os)} \leq C m^{-1/2}\nu^{-1} \label{visc:1}, 
\end{equation}
Moreover, there exists a subsequence $u_{m_k}$, which we will denote again by $u_m$, such that 
\begin{equation}
\lim_{m\rightarrow \infty}\|u-\um\|_{H^1(\Of)} = 0 \label{visc:2},
\end{equation}
where $u$ is a weak solution to \eqref{NS}. 
If additionally $\Omega$ and $\Of$ are $C^2$ domains, while $f \in L^2(\Omega)$ and $\|f\|_{H^{-1}(\Omega)}$ is sufficiently small with respect to $\nu$, then
\begin{align}
&\|\um\|_{H^1(\Os)} \leq C(\nu m)^{-1}, \label{visc:3} \\
&\|u-\um\|_{H^{1}(\Of)} \leq C \nu^{-1} m^{-1/2}. \label{visc:4}
\end{align}
\end{theorem}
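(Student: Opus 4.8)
The plan is to prove the qualitative statements \eqref{visc:1}--\eqref{visc:2} from energy estimates and weak compactness alone, and then to obtain the quantitative rates \eqref{visc:3}--\eqref{visc:4} by a comparison argument that trades the large viscosity on $\Os$ for an extra power of $m$. First I would test \eqref{weak_visc} with $v=\um$: skew-symmetry kills the convective term, leaving the energy identity $\nu\|\nabla\um\|_{L^2(\Of)}^2 + m\nu\|\nabla\um\|_{L^2(\Os)}^2 = \langle f,\um\rangle$. Since $m\ge 1$, the left side dominates $\nu\|\nabla\um\|_{L^2(\Omega)}^2$, and Poincaré on $H^1_0(\Omega)$ gives the uniform bound $\|\um\|_V\le C\nu^{-1}\|f\|_{V'}$. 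Reinserting this isolates the obstacle part, $m\nu\|\nabla\um\|_{L^2(\Os)}^2\le\|f\|_{V'}\|\um\|_V\le C\nu^{-1}\|f\|_{V'}^2$, hence $\|\nabla\um\|_{L^2(\Os)}\le Cm^{-1/2}\nu^{-1}$. To upgrade this to the full $H^1(\Os)$ bound \eqref{visc:1} I would invoke \eqref{touching}: because $\um=0$ on $\de\Os\cap\de\Omega$, a set of positive surface measure (the interior condition guaranteeing this set is relatively open and admissible), a Poincaré inequality on $\Os$ controls $\|\um\|_{L^2(\Os)}$ by $\|\nabla\um\|_{L^2(\Os)}$. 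This is exactly where the obstacle must be pinned to $\de\Omega$; otherwise the large viscosity would only force $\nabla\um\to 0$, i.e. a spurious constant flow.

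For \eqref{visc:2}, the uniform bound yields a subsequence with $\um\rightharpoonup\bar u$ in $V$ and, by Rellich, $\um\to\bar u$ in $L^4(\Omega)$ (using $d\le 3$). Bound \eqref{visc:1} forces $\bar u=0$ on $\Os$. Testing \eqref{weak_visc} only with $v\in V_F$ extended by zero --- for which $\nabla v\equiv 0$ on $\Os$, so the large viscosity never enters --- and passing to the limit (the convective term, after one integration by parts, pairs the strongly convergent $\um\otimes\um$ with $\nabla v\in L^2$) shows that $\bar u|_{\Of}$ solves \eqref{NS}. Strong $H^1(\Of)$ convergence then follows from $\nu\|\nabla\um\|_{L^2(\Of)}^2\le\langle f,\um\rangle\to\langle f,\bar u\rangle=\nu\|\nabla\bar u\|_{L^2(\Of)}^2$, so that $\limsup\|\nabla\um\|_{L^2(\Of)}\le\|\nabla\bar u\|_{L^2(\Of)}$, which with weak lower semicontinuity gives convergence of norms, hence \eqref{visc:2}. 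The subsequence caveat is forced by the possible non-uniqueness of stationary solutions.

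The quantitative part is the core. I would extend $u$ by zero to $\tilde u\in V$ (legitimate since $u$ vanishes on $\Sigma$, so the extension is divergence-free and lies in $H^1_0(\Omega)$) and set $e=\um-\tilde u\in V$, noting $e=\um-u$ on $\Of$ and $e=\um$ on both $\Os$ and $\Sigma$. Subtracting the strong form of \eqref{NS} tested against $e$ from \eqref{weak_visc} tested against $e$, the convective difference collapses via $\int(w\cdot\nabla)e\cdot e=0$ to the single term $\int_\Omega(e\cdot\nabla)\tilde u\cdot e$, which is $\le C_S\|\nabla\tilde u\|_{L^2}\|\nabla e\|_{L^2(\Omega)}^2$. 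Since $\|\nabla\tilde u\|_{L^2}=\|\nabla u\|_{L^2(\Of)}\le C\nu^{-1}\|f\|_{V'}$, the smallness of $\|f\|_{H^{-1}}/\nu$ lets me absorb this into the coercive term $\int_\Omega\mu_m|\nabla e|^2$. What remains is an inequality of the schematic form
\begin{equation}
\tfrac{\nu}{2}\|\nabla e\|_{L^2(\Of)}^2 + \tfrac{m\nu}{2}\|\nabla e\|_{L^2(\Os)}^2 \;\le\; \Big|\nu\!\int_\Sigma (\de_n u)\cdot\um\,dS\Big| + \Big|\int_\Sigma p\,(\um\cdot n)\,dS\Big|,
\end{equation}
where $n$ is the outward unit normal to $\Of$ and I have used $e=\um$ on $\Sigma$.

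I expect the $\Sigma$ boundary terms to be the main obstacle, and they are what bring in the remaining hypotheses. With $f\in L^2$ and $C^2$ domains, elliptic regularity for \eqref{NS} gives $u\in H^2(\Of)$ and $p\in H^1(\Of)$, so $\de_n u$ and $p$ have $L^2(\Sigma)$ traces bounded by $C\|f\|_{L^2}$, and the right-hand side is $\le C\|f\|_{L^2}\|\um\|_{L^2(\Sigma)}$. The decisive step is to control $\|\um\|_{L^2(\Sigma)}$ by $\|\nabla\um\|_{L^2(\Os)}$ alone: combining the multiplicative trace inequality $\|\um\|_{L^2(\de\Os)}^2\le C\|\um\|_{L^2(\Os)}\|\um\|_{H^1(\Os)}$ with the $\Os$-Poincaré inequality from \eqref{touching} yields $\|\um\|_{L^2(\Sigma)}\le C\|\nabla\um\|_{L^2(\Os)}=C\|\nabla e\|_{L^2(\Os)}$. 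Substituting, the obstacle line becomes $\tfrac{m\nu}{2}\|\nabla e\|_{L^2(\Os)}^2\le C\|f\|_{L^2}\|\nabla e\|_{L^2(\Os)}$, so the large factor $m\nu$ cancels against a single power, giving $\|\nabla e\|_{L^2(\Os)}\le C(m\nu)^{-1}$, which is \eqref{visc:3} (the gain of one power of $m$ over \eqref{visc:1} coming precisely from the high viscosity); the $L^2(\Os)$ part follows by Poincaré. Feeding this into the fluid line, $\tfrac{\nu}{2}\|\nabla e\|_{L^2(\Of)}^2\le C\|f\|_{L^2}\|\nabla e\|_{L^2(\Os)}\le C(m\nu)^{-1}\|f\|_{L^2}^2$, produces $\|\nabla e\|_{L^2(\Of)}\le Cm^{-1/2}\nu^{-1}$, and global Poincaré on $H^1_0(\Omega)$ converts these gradient bounds into \eqref{visc:4}. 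The $\sqrt m$ loss between \eqref{visc:3} and \eqref{visc:4} simply reflects that $\Of$ carries viscosity $\nu$ rather than $m\nu$.
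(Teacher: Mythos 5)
Your proposal is correct and follows essentially the same route as the paper: the energy estimate with Young's inequality for \eqref{visc:1}, the Poincar\'e inequality on $\Os$ enabled by \eqref{touching}, weak compactness plus identification of the limit for \eqref{visc:2}, and---for the quantitative rates---the key step of rewriting the residual through the normal stress $g=-pn+\nu\,\de_n u\in L^2(\Sigma)$ (available by $H^2\times H^1$ regularity) and absorbing the resulting $\Sigma$-term into the $m\nu$-weighted gradient on $\Os$. Your minor variations (proving strong $H^1(\Of)$ convergence via norm convergence of the Dirichlet energy rather than by subtracting the equations, and working with the boundary terms directly instead of the paper's auxiliary functional $h$) are cosmetic and equally valid.
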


\begin{proof}
Let $(u,p)\in V(\Omega)\times L^2(\Omega) $ be the solution of stationary Navier--Stokes equations
\begin{equation}
\label{NS2}
\begin{split}
 - \nu \, \Delta u_F  + (u_F\cdot \nabla)u_F + \nabla p_f &= f_F  \quad \text{ in } \Of,\\
\div u_F &= 0  \quad \text{ in } \Of, \\
u_F &= 0 \quad \text{ on } {\partial \Of}.
\end{split}
\end{equation}
with the prolongation $(u_S,p_S)=(0,0)$ in $\Os$ (see \cite{angot2}). 
Taking $\um$ as a test function in \eqref{weak_visc} we get 
\begin{equation}
   \int_{\Of}  \nu\vert{\nabla \um}\vert^2\,dx + \nu m  \int_{\Os}  \vert{\nabla \um}\vert^2\,dx = \langle f,u_m \rangle_{H^{-1}(\Omega),H^1_0(\Omega)} \leq \Vert f\Vert_{H^{-1}(\Omega)} \Vert \nabla \um\Vert_{L^2(\Omega)} 
.\end{equation}
Using Young inequality, we have
\begin{equation} \label{ene:1}
   \frac{\nu}{2}  \Vert{\nabla \um}\Vert^2_{L^2(\Of)} + \frac{\nu m}{2}   \Vert{\nabla \um}\Vert^2_{L^2(\Os)} \leq \frac{1}{2\nu}\Vert f\Vert^2_{H^{-1}(\Omega)}  
,\end{equation}
therefore we get
\begin{equation}\label{2.13}
     \Vert{ \nabla \um}\Vert_{L^2(\Os)}   \leq \frac{C}{m^{1/2}\nu}\Vert f\Vert_{H^{-1}(\Omega)},  
\end{equation}
which proves (\ref{visc:1}).  
The estimate (\ref{ene:1}) implies that there exists a subsequence, which we denote again by $u_m$, s.t. 
\begin{equation} \label{conv:1}
\um\rightarrow \Tilde{u} \in L^p(\Omega), \quad 
\um\rightharpoonup \Tilde{ u} \in H^1(\Omega) 
\end{equation}
for $1\leq p <\infty$ in case $d=2$ and $1\leq p <6$ in case $d=3$. 
Moreover, (\ref{2.13}) and (\ref{touching}) gives  $\Tilde{u} = 0$ in $\Os$.
Next, we can rewrite (\ref{weak_visc}) as 
\begin{equation*}
\nu m\int_{\Os} \nabla \um:\nabla v \,dx = \langle f,v \rangle_{V',V}
-\int_{\Of}\nu\nabla u_m : \nabla v \,dx - \int_{\Omega}(u_m\cdot\nabla)u_m v \,dx.
\end{equation*}
This identity together with (\ref{ene:1}) implies that
$- \div\left[m \chi_{\Os}\nabla \um\right]$ is bounded in $V^{\prime}$, therefore there exists $h \in V'$ such that 
$$
\lim_{m\rightarrow \infty}  (- \div m\chi_{\Os} \nabla \um) = h \;\; \textrm{weakly in} \; V^{\prime}
$$
and 
 \begin{equation} \label{h:prop}
 \langle h , \phi\rangle =0 \quad \forall \phi \in \mathcal{D}(\Of).
 \end{equation}
The convergences (\ref{conv:1}) allow to pass with $m\to\infty$ in \eqref{weak_visc} to obtain 
\begin{equation}\label{h}
\int_{\Omega} (\tilde u\cdot\nabla)\tilde uv\,dx+\nu\int_{\Omega} \nabla \Tilde{ u} \cdot \nabla v \,dx + \langle h, v\rangle_{V^{\prime}, V} = \langle f, v\rangle_{V^{\prime}, V} \quad \forall v \in V.
\end{equation}
From (\ref{h:prop}) and (\ref{h}) we conclude
\begin{equation}
\int_{\Of} (\tilde u\cdot\nabla)\tilde uv\,dx+\nu \int_{\Of} \nabla \Tilde{ u} \cdot \nabla v \,dx 
= \langle f, v\rangle_{V_F^{\prime}, V_F} \quad \forall v \in V_F. 
\end{equation}
By continuity of trace operator, $\um =0$ on $\partial \Omega$ implies $\Tilde{u}|_{\partial \Omega}=0$. Therefore $\tilde u$ is indeed a weak solution to \eqref{NS}, so in what follows we write $u=\tilde u$.
It remains to prove the strong convergence in $H^1(\Omega)$. For this purpose we subtract (\ref{weak_visc}) and (\ref{h}) taking $v = \um- u$. We get
\begin{equation}\label{er}
\begin{split}
   \nu\int_{\Of}  \vert{\nabla (\um - u)}\vert^2\,dx + \nu m \int_{\Os}  \vert{\nabla (\um - u)}\vert^2\,dx   &= \langle h ,(\um - u) \rangle_{V^{\prime}, V}\\
   &-\int_{\Omega} [(\um\cdot\nabla)\um-(u \cdot\nabla)u](\um - u)\,dx 
.\end{split}
\end{equation}
By the weak convergence of $\um$ in $H^1(\Omega)$, the first term on the RHS of the above expression tends to zero. The second can be decomposed as 
$$
\int_{\Omega} (u_m\cdot\nabla)(u_m-u)(u_m-u)\, dx
+\int_{\Omega} ((u_m-u)\cdot \nabla )u (u_m-u)\,dx.
$$
We have $u_m(u_m-u)\rightarrow 0$ in $L_p$ for $p<3$,
which together with weak convergence of $\nabla u_m$ implies convergence of the first integral to zero.  The second converges obviously by \eqref{conv:1} and H\"older inequality.
Therefore we have (\ref{visc:2}).

Now, assume that $f\in L^2(\Omega)$ and $\Omega,\Of \in C^2$. 
The idea is to get rid of integrals over $\Of$ on the RHS of the identity leading to the energy estimate, and keep there only $\Os$ terms, which allow to take advantage of the large parameter $m$ to show higher order of convergence. 
However, in case on nonlinear system we have also a term with difference of convective terms, which enforces the assumption of smallness of $\|f\|_{H^{-1}}$.
Under the assumed regularity of $f,\Omega$ and $\Omega_F$ we have $(u_f,p_f)\in H^2(\Of)\times H^1(\Of)$,  
and therefore 
\begin{equation}
\tau_f n := -p n + \nu \nabla u\cdot n \in H^{1/2}(\Sigma).
\end{equation}
Testing the Navier-Stokes equation (\ref{NS}) with 
$\phi \in H^1_0(\Omega)$ not vanishing on $\Sigma$ 
and assuming $u = 0$ in $\Os$ we obtain
\begin{equation}\label{weak_st}
  \int_{\Of} ( u\cdot\nabla) u\phi\,dx +\nu\int_{\Of} \nabla u \cdot  \nabla  \phi   \,dx - \int_{\Sigma} g \cdot \phi ds = \int_{\Of} f \cdot\phi dx, 
\end{equation}
where $g:= \tau_f n \in H^{1/2}(\Sigma)$.  
The identity (\ref{weak_st}) holds in particular $\forall \phi \in V$, which together with (\ref{h}) gives 
\begin{equation}\label{h_hint}
    \langle h, \phi\rangle_{V^{\prime}, V} = \int_{\Os} f \cdot\phi \, dx - \int_{\Sigma} g \cdot \phi \,ds \quad \forall\;\phi \in V.
\end{equation}
Using (\ref{h_hint}) in (\ref{er}) and taking $\vm:= \um - u$ we obtain
\begin{multline}\label{error}
   \nu\int_{\Of}  \vert{\nabla \vm }\vert^2\,dx +\nu m \int_{\Os}  \vert{\nabla \vm }\vert^2\,dx   = \int_{\Os} f \cdot \vm \, dx - \int_{\Sigma} g \cdot \vm \,ds\\
   -\int_{\Omega} \bigl((\um\cdot\nabla)\um-(u \cdot\nabla)u\bigr)\vm \,dx .
\end{multline}
Now, we will estimate RHS of (\ref{error}). For this purpose we use H\"older, Young and Poincar\'e inequalities and get
\begin{equation}
    \int_{\Os} f \cdot \vm \, dx \leq \dfrac{C_P}{2\nu m} \Vert f\Vert^2_{L^2(\Os)}+\dfrac{\nu m}{2} \Vert \nabla \vm\Vert^2_{L^2(\Os)},
\end{equation}
where $C_P$ is the constant from the Poincar\'e inequality. Next,
we use H\"older inequality, trace lemma and Young inequality to obtain 
\begin{equation}
    \int_{\Sigma} g \cdot \vm \,ds \leq C \Vert g\Vert_{L^2(\Sigma)}\Vert v_{m}\Vert_{L^2(\Sigma)}\leq \dfrac{C}{\nu m} \Vert g\Vert^2_{L^2(\Sigma)} + \dfrac{\nu m}{4} \Vert \nabla \vm\Vert^2_{L^2(\Os)}
.\end{equation}
For the last term on the RHS of (\ref{error}) we have, 
again by H\"older and Poincar\'e inequalities,
\begin{multline}\label{nonlin}
\left| \int_{\Omega} [(\um\cdot\nabla)\um-(u \cdot\nabla)u]\vm \,dx \right|
\leq \int_{\Omega} \left(|(u_m \cdot \nabla v_m)v_m| + |(v_m \cdot \nabla u)v_m| \right) \,dx \\
\leq C (\|\nabla u_m\|_{L^2(\Omega)}+\|\nabla u\|_{L^2(\Omega)}) \|\nabla v_m\|^2_{L^2(\Omega)}.
\end{multline}
Combining the above inequalities, we get 
\begin{multline*}
    \nu \Vert \nabla \vm\Vert^2_{L^2(\Of)}+\dfrac{\nu m}{4}\Vert \nabla \vm \Vert^2_{L^2(\Os)} \\
    \leq  \dfrac{1}{\nu m} \Big( C(\Omega, f)+  C\Vert g\Vert^2_{L^2(\Sigma)} \Big)
    +C (\|\nabla u_m\|_{L^2(\Omega)}+\|\nabla u\|_{L^2(\Omega)}) \|\nabla v_m\|^2_{L^2(\Omega)}.
\end{multline*}
Assuming $\|f\|_{H^{-1}(\Omega)}$ sufficiently small w.r.t. $\nu$ we can absorb the last term on the RHS by the LHS to obtain 
\begin{equation}
    \nu \Vert \nabla \vm\Vert^2_{L^2(\Of)}+\dfrac{\nu m}{4}\Vert \nabla \vm \Vert^2_{L^2(\Os)} \leq \dfrac{1}{\nu m} \Big( C(\Omega, f)+  C\Vert g\Vert^2_{L^2(\Sigma)} \Big).
\end{equation}
Recalling that $\vm= \um - u$ 
and $u \equiv 0$ on $\Os$ we get
\[\Vert \um- u\Vert_{H^1(\Omega)}\leq \nu^{-1}m^{-1/2} C_1 (\Omega,g,f)\]
and 
\[\Vert \um\Vert_{H^1(\Os)}\leq (\nu m)^{-1}C_2 (\Omega,g,f).\]
\end{proof}

\subsection{Mixed penalization}
\label{sec:mixed}
It is obvious that the mixed approximation (\ref{NSmix}) satisfies the same estimates as both volume and viscosity approximations. However, it is possible to show additional estimates which involve both approximation parameters. In our proof, we will make use of the Poincar\'e inequality, therefore we will assume again that the obstacle touches the boundary (we recall however that this assumption is not necessary for the convergence of the mixed approximation --- we only need it to obtain the novel estimates).
\begin{theorem} \label{thm:mix}
Assume $\Omega$ and $\Of$ are Lipschitz domains, $f \in H^{-1}(\Omega)$ and condition (\ref{touching}) holds. Let  $\umn$ a weak solution to (\ref{NSmix}). Then 
$\umn$ satisfies (\ref{vol:1.1}) and (\ref{visc:1}). 
Moreover, there exists a subsequence, still denoted $\umn$, which satisfies the estimates (\ref{vol:1.2}) and (\ref{visc:2}), where $u$ is a weak solution to \eqref{NS}.
If we assume additionally that $\Omega$ and $\Of$ are $C^2$ domains, $f \in L^2(\Omega)$ and $\|f\|_{H^{-1}(\Omega)}$ is sufficiently small with respect to $\nu$ then  
the estimates (\ref{vol:3})-(\ref{vol:4})  and (\ref{visc:3})-(\ref{visc:4}) hold. Moreover, we have 
\begin{align}
&\|\umn\|_{H^1(\Os)} \leq C(\nu m)^{-3/4}n^{-1/4}, \label{mix:3} \\[3pt]
&\|\umn\|_{L^2(\Os)} \leq C(\nu m)^{-1/4}n^{-3/4}, \label{mix:4} \\[3pt]
&\|u-\umn\|_{H^{1}(\Of)} \leq C(\nu mn)^{-1/4}. \label{mix:5}
\end{align}
\end{theorem}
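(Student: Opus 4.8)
The plan is to treat \eqref{NSmix} as a common generalization of the two previous schemes: the already-known bounds come from the same energy identity as before (now carrying an extra nonnegative term), while the genuinely new estimates \eqref{mix:3}--\eqref{mix:5} come from a single refined error identity in which the two dissipation mechanisms are made to cooperate. Testing \eqref{weak_mix} with $v=\umn$ and using $\div\umn=0$ to annihilate the convective term gives
\[
\nu\|\nabla\umn\|_{L^2(\Of)}^2+m\nu\|\nabla\umn\|_{L^2(\Os)}^2+n\|\umn\|_{L^2(\Os)}^2\le \tfrac{1}{2\nu}\|f\|_{H^{-1}(\Omega)}^2,
\]
which is \eqref{ene:1} augmented by the nonnegative term $n\|\umn\|_{L^2(\Os)}^2$. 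Reading off its second term reproduces \eqref{visc:1} and its third reproduces \eqref{vol:1.1}; the weak/strong compactness argument of the viscosity proof (the convergences \eqref{conv:1}, identification of the limit as a weak solution of \eqref{NS}, and the strong $H^1(\Of)$ convergence via \eqref{er}) carries over unchanged, the extra term only helping, so \eqref{vol:1.2} and \eqref{visc:2} hold. Discarding whichever of the two $\Os$-terms is not needed then recovers the individual refined rates \eqref{vol:3}--\eqref{vol:4} and \eqref{visc:3}--\eqref{visc:4} by repeating the corresponding single-method arguments.

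For the new estimates I would follow the viscosity proof up to the error identity. Extending $(u,p)$ by zero in $\Os$ and passing to the limit $m,n\to\infty$ yields the limiting identity \eqref{h} with an interface functional $h\in V'$; since the exact-solution identity \eqref{weak_st} involves only $u$ and is independent of the penalization, the characterization \eqref{h_hint} holds with the same interface stress $g=\tau_f n\in H^{1/2}(\Sigma)$. Testing the difference of \eqref{weak_mix} and \eqref{h} with $\vmn=\umn-u$ (so that $\vmn=\umn$ on $\Os$, where $u\equiv0$) produces the three-term analogue of \eqref{error},
\[
\nu\|\nabla\vmn\|_{L^2(\Of)}^2+m\nu\|\nabla\vmn\|_{L^2(\Os)}^2+n\|\vmn\|_{L^2(\Os)}^2=\int_{\Os}f\cdot\vmn-\int_{\Sigma}g\cdot\vmn-\int_{\Omega}\bigl[(\umn\cdot\nabla)\umn-(u\cdot\nabla)u\bigr]\vmn,
\]
and the convective difference is controlled exactly as in \eqref{nonlin} by $C(\|\nabla\umn\|_{L^2(\Omega)}+\|\nabla u\|_{L^2(\Omega)})\|\nabla\vmn\|_{L^2(\Omega)}^2$ and absorbed on the left under the smallness of $\|f\|_{H^{-1}(\Omega)}$.

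The crux is the interface term, and this is where both parameters are made to pay. Writing $A=\|\nabla\vmn\|_{L^2(\Os)}$ and $B=\|\vmn\|_{L^2(\Os)}$, the multiplicative trace inequality $\|\vmn\|_{L^2(\partial\Os)}^2\le C\|\vmn\|_{L^2(\Os)}\|\vmn\|_{H^1(\Os)}$ combined with the Poincaré inequality on $\Os$ — valid precisely because of the touching hypothesis \eqref{touching} — gives $\|\vmn\|_{L^2(\Sigma)}^2\le C\,A\,B$, hence $\bigl|\int_\Sigma g\cdot\vmn\bigr|\le C\|g\|_{L^2(\Sigma)}A^{1/2}B^{1/2}$. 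I would absorb this through a three-exponent Young inequality (exponents $4,4,2$), attaching the weights $(m\nu)^{1/4}$ and $n^{1/4}$ to $A^{1/2}$ and $B^{1/2}$, so that the product is split between $\tfrac{m\nu}{4}A^2$ and $\tfrac{n}{4}B^2$ and leaves a remainder of order $(m\nu n)^{-1/2}\|g\|_{L^2(\Sigma)}^2$; the volume term $\int_{\Os}f\cdot\vmn$ is handled by ordinary Young against whichever of the two $\Os$-terms is larger and is dominated by the same geometric-mean quantity.

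The resulting bound
\[
\nu\|\nabla\vmn\|_{L^2(\Of)}^2+m\nu\|\nabla\vmn\|_{L^2(\Os)}^2+n\|\vmn\|_{L^2(\Os)}^2\le C(m\nu n)^{-1/2}
\]
then yields all three claims simultaneously: the $\Os$-gradient term gives \eqref{mix:3} (after Poincaré on $\Os$ to pass to the full $H^1(\Os)$ norm), the $\Os$-$L^2$ term gives \eqref{mix:4}, and the $\Of$ term gives the $(mn)^{-1/4}$ rate of \eqref{mix:5}. The step I expect to be the main obstacle is this balanced interface estimate: one must pair the multiplicative trace inequality with a correctly weighted Young inequality so that the boundary contribution is shared between the viscous and the $L^2$ penalization in exactly the proportion that produces the geometric-mean rate $(mn)^{-1/2}$, rather than the one-sided rates $m^{-1/2}$ or $n^{-1/2}$ one gets by charging the term entirely to a single mechanism. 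A secondary point requiring care is verifying that the added volume penalization does not alter the limiting interface functional $h$, i.e. that \eqref{h_hint} survives with the same $g$.
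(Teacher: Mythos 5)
Your proposal is correct and follows essentially the same route as the paper: the same three-term energy identity for the a priori bounds and compactness, the same error identity obtained from \eqref{h_hint}, and the same balanced treatment of the interface term (the paper phrases your multiplicative trace inequality as the trace theorem into $H^{1/2}(\Os)$ followed by interpolation, then splits $(\nu mn)^{1/2}\|\nabla \vmn\|_{L^2(\Os)}\|\vmn\|_{L^2(\Os)}$ by Young into the $\nu m$- and $n$-weighted terms, which is exactly your $(4,4,2)$-exponent Young step). The only cosmetic difference is that the paper also routes the volume term $\int_{\Os}f\cdot\vmn$ through the same Poincar\'e--interpolation--Young splitting to land directly on the $(\nu mn)^{-1/2}$ remainder, rather than charging it to the larger of the two $\Os$-terms as you suggest; both give the claimed rate.
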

\begin{proof}

Taking $\umn$ as a test function in \eqref{weak_mix} we get
\begin{equation} 
   \int_{\Of}  \nu \vert{\nabla \umn}\vert^2\,dx + \nu m  \int_{\Os}  \vert{\nabla \umn}\vert^2\,dx + n \int_{\Os}  \vert{ \umn}\vert^2\,dx  \leq \Vert f\Vert_{H^{-1}} \Vert \nabla \umn\Vert_{L^2(\Omega)}, 
\end{equation}
which gives the same estimates as in pure volume and viscosity approximations. The proof of strong convergence in $H^1(\Omega)$ is analogous to the viscosity case. 
%
We have, up to a subsequence,  
\begin{equation} \label{conv:2}
\umn\rightarrow \Tilde{u} \in L^p(\Omega), \quad 
\umn\rightharpoonup \Tilde{ u} \in H^1(\Omega)
\end{equation}
for $1\leq p <\infty$ in case $d=2$ and $1\leq p <6$ in case $d=3$, 
where $\Tilde{u} = 0$ in $\Os$. Moreover, there exists $h \in V'$ such that 
$$
\lim_{m\rightarrow \infty}  (- \div m\chi_{\Os} \nabla \umn + n\chi_{\Os} \umn) = h \;\; \textrm{weakly in} \; V^{\prime}
$$
and 
 \begin{equation} \label{h1:prop}
 \langle h , \phi\rangle =0 \quad \forall \phi \in \mathcal{D}(\Of).
 \end{equation}
The convergences (\ref{conv:2}) allow to pass with $m\to\infty$ in \eqref{weak_mix} to obtain 
\begin{equation}\label{h1}
\int_{\Omega} (\tilde u\cdot\nabla)\tilde uv\,dx+\int_{\Omega} \nabla \Tilde{ u} \cdot \nabla v \,dx + \langle h, v\rangle_{V^{\prime}, V} = \langle f, v\rangle_{V^{\prime}, V} \quad \forall v \in V.
\end{equation}
From (\ref{h1:prop}) and (\ref{h1}) we conclude that
$\tilde u$ is indeed a weak solution to \eqref{NS}, so in what follows we write $u=\tilde u$ with $\Tilde{u}|_{\partial \Omega}=0$.

Next, we subtract (\ref{weak_mix}) and (\ref{h1}) taking $v = \umn- u$ to obtain 
\begin{multline}\label{ermix}
   \nu \int_{\Of}  \vert{\nabla (\umn - u)}\vert^2\,dx + \nu m \int_{\Os}  \vert{\nabla (\umn - u)}\vert^2\,dx + n \int_{\Os}  \vert{ (\umn - u)}\vert^2\,dx   \\
   = \langle h ,(\umn - u) \rangle_{V^{\prime}, V}-\int_{\Omega} [(\umn\cdot\nabla)\umn-(u \cdot\nabla)u](\umn - u)\,dx .
\end{multline} 
Exactly as in the proof of \eqref{visc:2} we verify that the 
RHS of the above expression tends to zero, which proves the strong convergence in $H^1(\Omega)$.\\
Now, assume that $f\in L^2(\Omega)$ and $\Omega,\Of \in C^2$.
 Using (\ref{h_hint}) in (\ref{ermix}) and taking $\vmn:= \umn- u$ we get
\begin{equation}\label{errormix}
\begin{split}
   \int_{\Of}  &\vert{\nabla \vmn }\vert^2\,dx + m \int_{\Os}  \vert{\nabla \vmn }\vert^2\,dx + n \int_{\Os}  \vert{ \vmn }\vert^2\,dx  \\
   &= \int_{\Os} f \cdot \vmn \, dx - \int_{\Sigma} g \cdot \vmn \,ds-\int_{\Omega} [(\umn\cdot\nabla)\umn-(u \cdot\nabla)u]\vmn \,dx 
.\end{split}
\end{equation}
Let us estimate the RHS of (\ref{errormix}). By  H\"older, Young and Poincar\'e inequalities we get
\begin{equation}
\begin{aligned}
    \int_{\Os} f \cdot \vmn \, dx & \leq \frac{C}{(\nu mn)^{1/2}}\|f\|^2_{L^2(\Os)}+\frac{1}{2}(\nu mn)^{1/2}\|\nabla \vmn\|_{L^2(\Os)}\|\vmn\|_{L^2(\Os)}\\[3pt]
    & \leq \frac{C}{(\nu mn)^{1/2}} \Vert f\Vert^2_{L^2(\Os)}+\frac{\nu m}{4} \Vert \nabla \vmn\Vert^2_{L^2(\Os)}
    +\frac{n}{4} \Vert  \vmn\Vert^2_{L^2(\Os)}.
\end{aligned}
\end{equation}
Next, we use H\"older inequality, trace theorem, Young and interpolation inequalities to obtain 
\begin{align*}
    \int_{\Sigma} g \cdot \vmn \,ds & \leq \|g\|_{L^2(\Sigma)}\|\vmn\|_{L^2(\Os)}
    \leq C \|g\|_{L^2(\Sigma)}\|\vmn\|_{H^{1/2}(\Os)}\\
    & \leq \frac{C}{(\nu mn)^{1/2}} \|g\|_{L^2(\Sigma)}^2 + \frac{1}{2}(\nu mn)^{1/2} \|\nabla \vmn\|_{L^2(\Os)}\|\vmn\|_{L^2(\Os)}\\
    & \leq \frac{C}{(\nu mn)^{1/2}} \|g\|_{L^2(\Sigma)}^2 
    + \frac{\nu m}{4}\|\nabla \vmn\|^2_{L^2(\Os)}  + \frac{n}{4}\|\vmn\|^2_{L^2(\Os)}.
\end{align*}
For the last term on the RHS of (\ref{errormix}) we have (\ref{nonlin}).

Combining the above inequalities and assuming $\|f\|_{H^{-1}(\Omega)}$ sufficiently small with respect to $\nu$, which allows to repeat (\ref{nonlin}) and absorb the last term of the RHS of (\ref{errormix}) by the LHS, we obtain 
\begin{equation*}
    \Vert \nabla \vmn\Vert^2_{L^2(\Of)}+\nu m\Vert \nabla \vmn 
    \Vert^2_{L^2(\Os)}+ n\Vert \vmn 
    \Vert^2_{L^2(\Os)}
    \leq \frac{C}{(\nu mn)^{1/2}} \Big( \|f\|_{L^2(\Os)}^2 + \|g\|^2_{L^2(\Sigma)} \Big),
\end{equation*}
from which we conclude (\ref{mix:3})-(\ref{mix:5}).
\end{proof}

\section{Numerical simulations} \label{sec:num}
\renewcommand{\um}{\twoparam{u}{\viscosity}} 
\renewcommand{\un}{\twoparam{u}{\volume}} 

In this section, we present numerical experiments to investigate the dependence of the convergence rate of approximate solutions introduced in (\ref{NSvol})--(\ref{NSmix}) on the penalizing parameters $\viscosity$, $\volume$.

To this end, we compute a two-dimensional flow around an obstacle in a fixed channel. In order to get broader insight, in addition to varying $\viscosity$ and $\volume$, we also change the shape and placement of obstacles. First, we consider a box-shaped obstacle touching the boundary in accordance with the theoretical results (see Section~\ref{sec:experim:box}). Next, in Section~\ref{sec:experim:taipei} we consider an obstacle with more complicated geometry to check how well penalization methods can handle cases with less regular solutions. Finally, in Section~\ref{sec:two obstacles} we relax the assumption that at least a part of each connected component of the interface $\Sigma$ must touch $\partial\Omega$ and consider a flow around two obstacles, where one of them is fully immersed in a fluid. 

The geometry and the input data closely follow \cite{jorge}, differing only in the number and shape of obstacles. Thus, all test cases are set up in a rectangular channel domain $\Omega = [0 ,L]\times [0 , H]\subset \R^2$, with $L=4$ and $H=2$. We assume the kinematic viscosity $\nu =1$, so that the Reynolds number $\Re = U H/\nu = 200$ with $U=100$.
Since we always assume no external forces, i.e. $f\equiv 0$ in (\ref{NS}), in our experiments we make a slight departure from the theoretical framework and consider, as in \cite{jorge}, a flow driven by nonhomogeneous boundary conditions. To be specific, on the left edge of $\Omega$, that is on $\Gamma_{in} = \{0\}\times [0, H]$, we prescribe an inflow Dirichlet boundary condition,
$$
u = (u_{in}, 0) \qquad \text{ on }\Gamma_{in}
$$
where $u_{in}$ is a parabolic profile
$$
u_{in} (x,y) = \dfrac{4U}{H^2} y\,(H - y).
$$
On the right edge, $\Gamma_{out} = \{L\}\times [0 , H]$, we prescribe the do-nothing boundary condition,
\[ \nu \partial_{n} u - p n = 0 \qquad \text{ on }\Gamma_{out},\]
where $n$ denotes the outer normal.
On all other parts of the boundary of the corresponding domain (i.e. $\Of$ in the case of ``real obstacle'' flow, or $\Omega$ otherwise), the no-slip boundary condition is imposed, $u=0$.

We have implemented our experimental framework in FEniCS package \cite{fenics}, discretizing Navier-Stokes equations with the $P_2-P_1$ Taylor-Hood finite element \cite{larson} on a triangular mesh in $\Omega$ (or in $\Of$ in the case of ``real obstacle'' flow), whose elements are aligned with the interface~$\Sigma$. The (unstructured) meshes have their diameter set to $h = 0.05$ and consist of roughly $8000$ triangular elements --- precise number depending on the selection of the obstacle --- and have been generated with the help of Gmsh software \cite{gmsh}.  
The resulting nonlinear system of algebraic equations is then approximately solved by means of the Newton's method with accuracy $10^{-10}$.

In each experiment we solve numerically \eqref{NS} to compute the ``reference'' flow $(u,p)$ in $\Of$ around ``real obstacle'' $\Os$ and then compare it with the velocity components  $\um$, $\un$ and $\umn$ of approximate penalized numerical solutions to (\ref{NSvol}), (\ref{NSvisc}), (\ref{NSmix}), respectively. To get more insight, for each $\twoparam{u}{\app}$, where $\app\in\{\volume,\viscosity,\mixed\}$, and  for $10^1\leq m,n \leq 10^{10}$, we compute separately $L^2$ norm and $H^1$
seminorm of errors in the compound domain $\Omega$, 
$$
\Vert u - \twoparam{u}{\app}\Vert_{L^2(\Omega)}, \quad
\vert u - \twoparam{u}{\app}\vert_{H^1(\Omega)}
$$ 
and inside the obstacle domain $\Os$ as well, i.e.
$$
\Vert \twoparam{u}{\app}\Vert_{L^2(\Os)}, \quad
\vert \twoparam{u}{\app}\vert_{H^1(\Os)}.
$$ 
(If possible, we also conduct some tests when either $n=0$ or $m=1$, i.e. for pure volume or viscosity penalization, respectively.)
Based on these measurements, we estimate empirical convergence rates as $m$ or $n$ increase towards the infinity in the above (semi)norms.

\subsection{A box-shaped obstacle touching the boundary}
\label{sec:experim:box}
\begin{figure}
\centering
\includegraphics[width=0.32\textwidth]{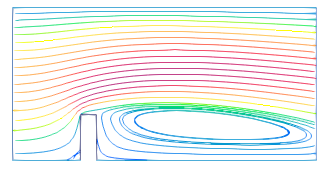}
\includegraphics[width=0.32\textwidth]{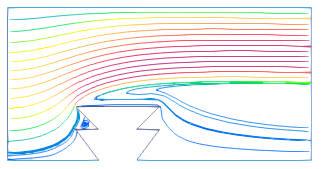}
\includegraphics[width=0.32\textwidth]{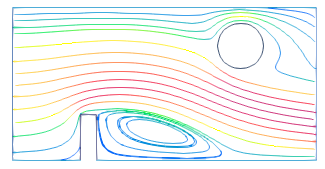}
\caption{From left to right: ,,real obstacle'' domains $\Of$ considered in Sections~\ref{sec:experim:box},~\ref{sec:experim:taipei} and~\ref{sec:two obstacles}, respectively, together with corresponding flow streamlines.} 

\label{stream-real}
\end{figure}
\begin{figure}
\centering
\includegraphics[width=0.32\textwidth]{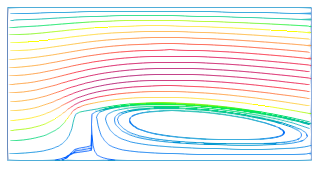}
\includegraphics[width=0.32\textwidth]{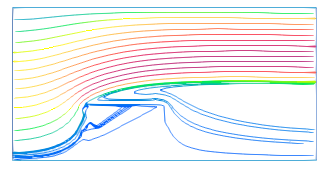}
\includegraphics[width=0.32\textwidth]{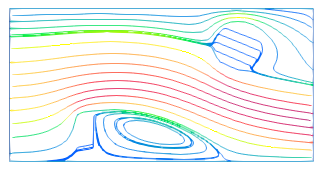}
\caption{Penalized solution streamlines in $\Omega$, approximating the flow depicted in \figref{stream-real}. The first two correspond to viscosity penalization with $m=10^5$; the last one comes from mixed penalization with $m=n=10^5$. Note how well the streamlines in the fluid domain $\Of$ approximate the actual flow around the obstacles, cf. \figref{stream-real}.}
\label{stream-penalized}
\end{figure}

In this section, we experiment with an obstacle touching the boundary of the domain $ \Omega$, as required in~Theorem~\ref{thm:visc}.
Precisely, we place a box-shaped constriction $\Os = [0.9,1.1]\times [0.0, 0.6]$ at the bottom of the channel, as shown in \figref{stream-real} (so the geometry corresponds to \cite{jorge} with the floating obstacle removed). 

\figref{h1_wall} presents the errors between the penalized and ``real obstacle'' solutions as a function of penalization parameter for all three types of approximation: volume, viscosity and a mixed one. For the latter, we prescribe $n=10^2\cdot m$ and so treat $m$ as the only independent parameter. 

The graphs indicate that penalized solutions do converge to the ``real obstacle'' solution at an asymptotically linear rate in all considered norms. In particular, the results suggest that the $H^1(\Os)$ estimate (\ref{visc:3}) is sharp. In all other combinations of norms and domains under consideration, experimental convergence rates are significantly better than predicted by Theorems~\ref{thm:vol}, \ref{thm:visc} and \ref{thm:mix} giving a hint that they may be suboptimal --- however, there is also some chance that the improved rates may be an artifact resulting from comparing numerical, not actual solutions. 
\begin{figure}
\twoplot{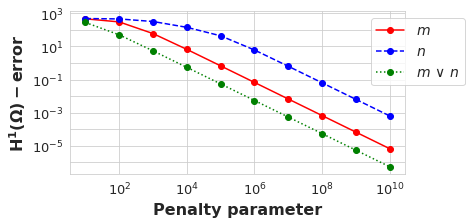}{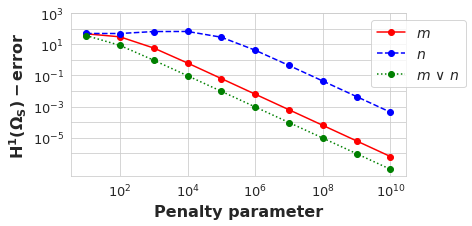}
\twoplot{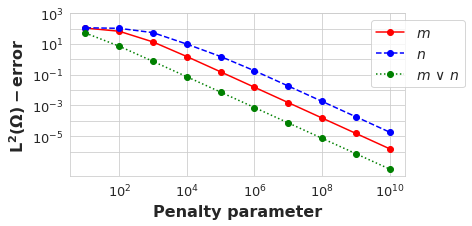}{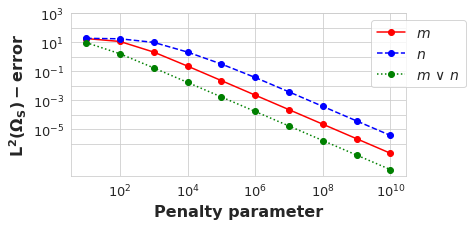}
\caption{Approximation errors measured in $H^1$ seminorm (top) or $L^2$ norm (bottom) on $\Omega$ (left) and $\Os$ (right), for the experiment from Section~\ref{sec:experim:box}. The plots correspond to $\un$ (blue), $\um$ (red) and $\umn$ (green). For $\umn$, we set $n=10^2\cdot m$ and treat only $m$ as the independent penalization parameter (the $x$-axis corresponds to values of $m$ in this case).}
\label{h1_wall}
\twoplot{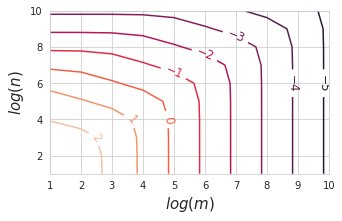}{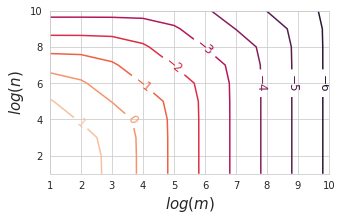}
\twoplot{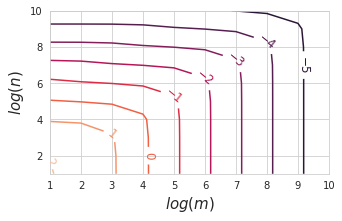}{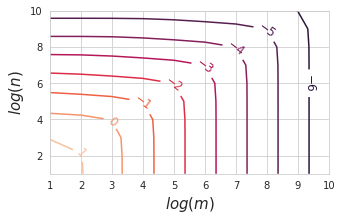}
\caption{Contour graphs of the logarithm of errors  in $H^1$ seminorm (top) or $L^2$ norm (bottom) measured on $\Omega$ (left) and $\Os$ (right), for the mixed penalization solution $\umn$  from Section~\ref{sec:experim:box}.}
\label{h1_lev_wall}
\end{figure}

Two other conclusions follow directly from convergence plots in \figref{h1_wall}. Firstly, for identical value of penalization parameter, viscosity approximation seems to result in an error smaller than the corresponding volume approximation (i.e. the former has a smaller multiplicative constant in front of $m^{-1}$). If \emph{both} penalization parameters are properly balanced, one can achieve an even smaller error when using mixed penalization. Secondly, the penalty parameter --- especially in the volume penalization case --- has to be large enough before the error starts to decrease at a linear speed; see e.g. the $H^1(\Os)$ error plot in \figref{h1_wall}. 

To get a better impression on how these two penalty parameters interact with each other in the mixed penalization case, in \figref{h1_lev_wall} we present logarithmic graphs of the errors for $10^1\leq m,n \leq 10^{10}$, with level lines corresponding to the magnitude of the error. Obviously, to get a prescribed level of accuracy, at least one penalty parameter must be sufficiently large. Moreover the observed error turns out to be roughly inversely proportional to $\max\{m, C n\}$ for some positive constant $C$. It also follows that for fixed $m$ (or $n$), the other penalization parameter must be large enough to start contributing to error improvement in a significant way.

\subsection{An obstacle with sharp corners}
\label{sec:experim:taipei}
For complex geometry fluid-structure problems, penalization methods in a fictitious domain may have a potential to be easier to implement and more efficient than other approaches. Therefore, in this section, we consider an obstacle featuring many acute or obtuse angles (see \figref{stream-real}), while leaving all other parameters unchanged, in order to check if such a complex shape --- which typically results in less regular solutions \cite{dauge} --- affects either the error level or the convergence rate. 

\figref{h1_taipei} shows the corresponding convergence histories. While we observe the same consistent pattern as in Section~\ref{sec:experim:box}, the approximation errors are roughly two orders of magnitude higher than those in \figref{h1_wall}, probably because the accurate solution is less regular. The level lines in \figref{h1_lev_taipei} are also qualitatively similar to in \figref{h1_lev_wall}, suggesting --- somewhat surprisingly --- that the order of approximation may be only weakly dependent, or even independent, on the smoothness of the solutions.

\begin{figure}
\twoplot{Er_tai_H1}{Er_tai_H1_S}
\twoplot{Er_tai_L2}{Er_tai_L2_S}
\caption{Approximation errors measured in $H^1$ seminorm (top) or $L^2$ norm (bottom) on $\Omega$ (left) and $\Os$ (right), for the experiment from Section~\ref{sec:experim:taipei}. The plots correspond to $\un$ (blue), $\um$ (red) and $\umn$ (green). For $\umn$, we set $n=10^2\cdot m$ and put the values of $m$ on the $x$-axis, as before.}
\label{h1_taipei}
\twoplot{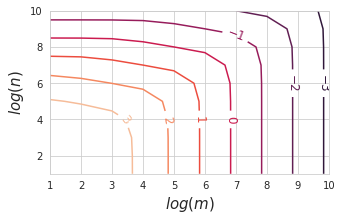}{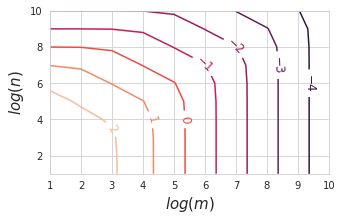}
\twoplot{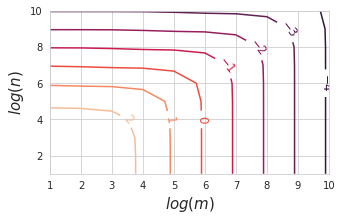}{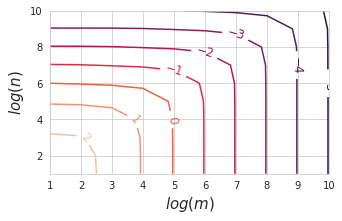}
\caption{Contour graphs of the logarithm of errors  in $H^1$ seminorm (top) or $L^2$ norm (bottom) measured on $\Omega$ (left) and $\Os$ (right), for the mixed penalization solution $\umn$  from Section~\ref{sec:experim:taipei}.}
\label{h1_lev_taipei}
\end{figure} 

\ifdraft
\begin{table}
\centering
\begin{tabular}{ |p{0.7cm}|p{2.5cm}| p{2.5cm}|p{2.5cm}|}
\hline
 Penalty coef. &  $\Vert u - \um \Vert_{H^1(\Omega)} $ &  $\Vert u - \un \Vert_{H^1(\Omega)}$ &  $\Vert u - \umn \Vert_{H^1(\Omega)}  $ \\
\hline
$10$ & $5.510545\cdot 10^{2}$ & $ 5.793257\cdot 10^{2}$  & $5.415849\cdot 10^{2}$\\

$10^2$ & $3.815544\cdot 10^{2}$& $5.073848\cdot 10^{2}$& $3.625033\cdot 10^{2}$\\

$10^3$ & $9.830387\cdot 10^{1}$&$3.892458\cdot 10^{2}$&$9.064909\cdot 10^{1}$ \\ 

$10^4 $& $1.211198\cdot 10^{1} $&$2.386082\cdot 10^{2}$ &$1.102316\cdot 10^{1}$ \\

$10^5$& $1.242772\cdot 10^{0} $&$9.455582\cdot 10^{1}$&$1.128357\cdot 10^{0}$  \\

$10^6$ & $1.246052\cdot 10^{-1} $&$1.75999\cdot 10^{1}$&$1.131045\cdot 10^{-1}$ \\

$10^7$& $1.246381\cdot 10^{-2} $&$1.94404\cdot 10^{0}$&$1.131315\cdot 10^{-2}$\\

$10^8$& $1.246414\cdot 10^{-3} $&$1.964858\cdot 10^{-1}$&$1.131342\cdot 10^{-3}$\\

$10^9$& $1.246417\cdot 10^{-4} $&$1.966968\cdot 10^{-2}$&$1.131344\cdot 10^{-4}$\\

$10^{10}$& $1.246417\cdot 10^{-5}$&$1.967178\cdot 10^{-3}$&$1.131345\cdot 10^{-5}$ \\
\hline
\end{tabular}
\caption{Table corresponds to Fig \ref{Comp_8}}
\label{table:1}
\end{table}

\begin{table}
\centering
\begin{tabular}{ |p{0.7cm}|p{2.5cm}| p{2.5cm}|p{2.5cm}|}
\hline
 Penalty coef. & $\Vert \um \Vert_{H^1(\Os)} $ &  $\Vert  \un \Vert_{H^1(\Os)}$ &  $\Vert \umn \Vert_{H^1(\Os)}  $ \\
\hline
$10$ & $8.249964\cdot 10^{1}$ & $ 9.585971\cdot 10^{1}$  & $8.019834\cdot 10^{1}$\\

$10^2$ & $4.237478\cdot 10^{1}$& $9.104529\cdot 10^{1}$& $3.876453\cdot 10^{1}$\\

$10^3$ & $8.749917\cdot 10^{0}$&$8.503317\cdot 10^{1}$&$7.813033\cdot 10^{0}$ \\ 

$10^4 $& $9.833153\cdot 10^{-1} $&$5.698188\cdot 10^{1}$ &$8.741463\cdot 10^{-1}$ \\

$10^5$& $9.95662\cdot 10^{-2} $&$3.675532\cdot 10^{1}$&$8.847293\cdot 10^{-2}$  \\

$10^6$ & $9.969154\cdot 10^{-3} $&$7.439083\cdot 10^{0}$&$8.858030\cdot 10^{-3}$ \\

$10^7$& $9.970409\cdot 10^{-4} $&$8.286719\cdot 10^{-1}$&$8.859105\cdot 10^{-4}$\\

$10^8$& $9.970534\cdot 10^{-5} $&$8.382179\cdot 10^{-2}$&$8.859213\cdot 10^{-5}$\\

$10^9$& $9.970547\cdot 10^{-6} $&$8.391845\cdot 10^{-3}$&$8.859223\cdot 10^{-6}$\\

$10^{10}$& $9.970549\cdot 10^{-7}$&$8.392813\cdot 10^{-4}$&$8.859225\cdot 10^{-7}$ \\
\hline
\end{tabular}
\caption{Table corresponds to Fig \ref{Comp_9}}
\label{table:2}
\end{table}
\fi

\subsection{Two obstacles, one surrounded by the fluid}
\label{sec:two obstacles}

Finally, we consider two obstacles, one of which is fully immersed in the fluid. The experiment replicates  (up to a horizontal symmetry) the setting from \cite{jorge}
$$
\Os^1 := [0.9,1.1 ]\times [0.0, 0.6] \quad \text{and} \quad \Os^2 := \left\lbrace (x,y)\in\R^2 : (x - 3.0 )^2 + (x - 1.5)^2= (0.3)^2 \right\rbrace, 
$$
so that $\Os = \Os^1 \cup \Os^2$, see \figref{stream-real}. Since $\Os^2$ does not touch the boundary of the domain $\Omega$, pure viscosity penalization will, in principle, not result in a sensible approximation. The reason is that in such case we only obtain $\|\nabla u\|\equiv 0$ in the limit, i.e. the solution is only constant (not necessarily vanishing) inside the immersed obstacle. Therefore in this case we restrict the comparison only to volume and mixed penalty approximations. 
\begin{figure}
\twoplot{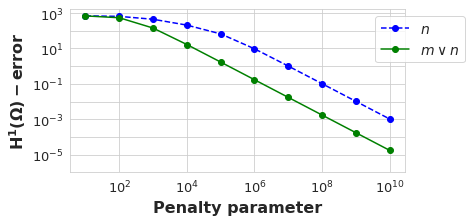}{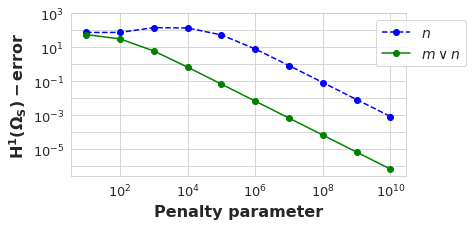}
\twoplot{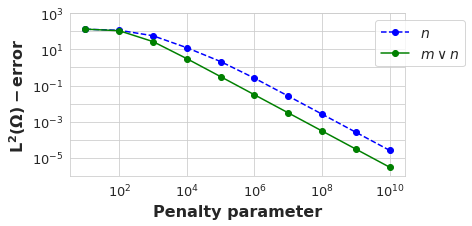}{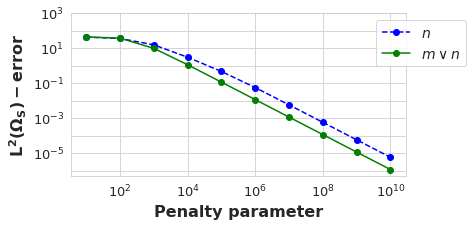}
\caption{Approximation errors measured in $H^1$ seminorm (top) or $L^2$ norm (bottom) on $\Omega$ (left) and $\Os$ (right), for the experiment from Section~\ref{sec:two obstacles}. The plots correspond to $\un$ (blue) and $\umn$ (green). For $\umn$, we set $n= m$.}
\label{h1_twoobs}
\twoplot{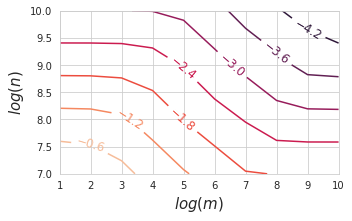}{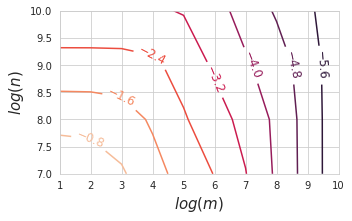}
\twoplot{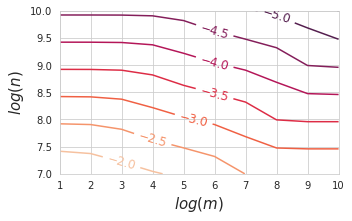}{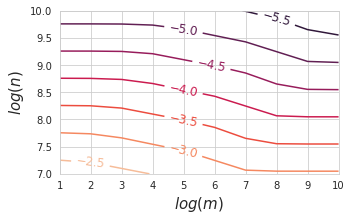}
\caption{Contour graphs of the logarithm of errors  in $H^1$ seminorm (top) or $L^2$ norm (bottom) measured on $\Omega$ (left) and $\Os$ (right), for the mixed penalization solution $\umn$  from Section~\ref{sec:two obstacles}.}
\label{h1_lev_twoobs}
\end{figure} 

Error graphs in \figref{h1_twoobs} confirm linear convergence in~$n$ and the possibility of further reduction of the error when using mixed penalization. \figref{h1_lev_twoobs}, which presents logarithmic graphs of the errors\footnote{We had to restrict ourselves to large volume penalty parameters, as our numerical solver struggled otherwise.} for $10^1 \leq m \leq 10^{10}$ and  $10^7 \leq n \leq 10^{10}$, indicates that to obtain a prescribed level of accuracy, the volume penalty parameter must be large enough; and the influence of the viscosity penalty is limited by the value of~$n$. This is in contrast to the case when all obstacles are touching $\partial\Omega$, when it is enough to increase only one of the penalties to improve the accuracy. 
 
\ifdraft
\begin{table}
\centering
\begin{tabular}{ |p{0.7cm}|p{2.5cm}|p{2.5cm}|}
\hline
 Penalty parameters &  $\Vert u - \un \Vert_{H^1(\Omega)}$ &  $\Vert u - \umn \Vert_{H^1(\Omega)} $ \\
\hline
$10$  & $ 7.03678\cdot 10^{2}$  & $6.99473\cdot 10^{2}$\\

$10^2$ & $6.57444\cdot 10^{2}$& $5.50444\cdot 10^{2}$\\

$10^3$ &$4.50944\cdot 10^{2}$&$1.43554\cdot 10^{2}$ \\ 

$10^4 $& $2.10615\cdot 10^{2}$ &$1.67639\cdot 10^{1}$ \\

$10^5$&$6.63825\cdot 10^{1}$&$1.70392\cdot 10^{0}$  \\

$10^6$ &$9.40406\cdot 10^{0}$&$1.70671\cdot 10^{-1}$ \\

$10^7$&$9.85519\cdot 10^{-1}$&$1.70698\cdot 10^{-2}$\\

$10^8$& $9.90347\cdot 10^{-2}$&$1.70702\cdot 10^{-3}$\\

$10^9$&$9.90832\cdot 10^{-3}$&$1.70702\cdot 10^{-4}$\\

$10^{10}$ & $9.90881\cdot 10^{-4}$&$1.70702\cdot 10^{-5}$ \\
\hline
\end{tabular}
\caption{Table corresponds to Fig \ref{Comp_3}}
\label{table:3}
\end{table}

\begin{table}
\centering
\begin{tabular}{ |p{0.7cm} |p{2.5cm}|p{2.5cm}|}
\hline
 Penalty parameters & $\Vert \un \Vert_{H^1(\Os)}$ &  $\Vert \umn \Vert_{H^1(\Os)}  $ \\
\hline
$10$ &  $ 7.29469\cdot 10^{1}$  & $5.48860\cdot 10^{1}$\\

$10^2$ & $7.32911\cdot 10^{1}$& $3.04753\cdot 10^{1}$\\

$10^3$&$1.40500\cdot 10^{2}$&$5.97586\cdot 10^{0}$ \\ 

$10^4$ & $1.33555\cdot 10^{2}$ &$6.63450\cdot 10^{-1}$ \\

$10^5$ & $5.15801\cdot 10^{1}$&$6.71108\cdot 10^{-2}$  \\

$10^6$ & $7.58036\cdot 10^{0}$&$6.71887\cdot 10^{-3}$ \\

$10^7$ & $7.97988\cdot 10^{-1}$&$6.71965\cdot 10^{-4}$\\

$10^8$ & $8.02268\cdot 10^{-2}$&$6.71973\cdot 10^{-5}$\\

$10^9$ & $8.02698\cdot 10^{-3}$&$6.71974\cdot 10^{-6}$\\

$10^{10}$ & $8.02742\cdot 10^{-4}$&$6.71974\cdot 10^{-7}$ \\
\hline
\end{tabular}
\caption{Table corresponds to Fig \ref{Comp_4}}
\label{table:4}
\end{table}
\fi


\ifdraft
\begin{table}
\centering
\begin{tabular}{ |p{0.7cm}|p{2.5cm}|p{2.5cm}|}
\hline
 Penalty parameters &  $\Vert u - \un \Vert_{L^2(\Omega)}$ &  $\Vert u - \umn \Vert_{L^2(\Omega)} $ \\
\hline
$10$  & $ 1.29573\cdot 10^{2}$  & $ 1.30331\cdot 10^{2}$\\

$10^2$ & $ 1.12961\cdot 10^{2}$& $ 1.06910\cdot 10^{2}$\\

$10^3$ &$5.62956\cdot 10^{1}$&$ 2.67713\cdot 10^{1}$ \\ 

$10^4 $& $ 1.23009\cdot 10^{1}$ &$3.06350\cdot 10^{0}$ \\

$10^5$&$2.112170\cdot 10^{0}$&$ 3.10491\cdot 10^{-1}$  \\

$10^6$ &$2.55144\cdot 10^{-1}$&$ 3.10905\cdot 10^{-2}$ \\

$10^7$&$2.61801\cdot 10^{-2}$&$ 3.10946\cdot 10^{-3}$\\

$10^8$& $2.62504\cdot 10^{-3}$&$ 3.10950\cdot 10^{-4}$\\

$10^9$&$ 2.62574\cdot 10^{-4}$&$ 3.10951\cdot 10^{-5}$\\

$10^{10}$ & $2.62581\cdot 10^{-5}$&$3.10951\cdot 10^{-6}$ \\
\hline
\end{tabular}
\caption{Table corresponds to Fig \ref{Comp_7}}
\label{table:7}
\end{table}

\begin{table}
\centering
\begin{tabular}{ |p{0.7cm} |p{2.5cm}|p{2.5cm}|}
\hline
 Penalty parameters & $\Vert \un \Vert_{L^2(\Os)}$ &  $\Vert \umn \Vert_{L^2(\Os)}  $ \\
\hline
$10$ &  $ 4.26973\cdot 10^{1}$  & $4.36019\cdot 10^{1}$\\

$10^2$ & $ 3.56756\cdot 10^{1}$& $3.71443\cdot 10^{1}$\\

$10^3$&$ 1.53869\cdot 10^{1}$&$ 9.73477\cdot 10^{0}$ \\ 

$10^4$ & $ 3.02264\cdot 10^{0}$ &$ 1.12297\cdot 10^{0}$ \\

$10^5$ & $ 4.63086\cdot 10^{-1}$&$ 1.13940\cdot 10^{-1}$  \\

$10^6$ & $ 5.53759\cdot 10^{-2}$&$ 1.14105\cdot 10^{-2}$ \\

$10^7$ & $5.68308\cdot 10^{-3}$&$ 1.14122\cdot 10^{-3}$\\

$10^8$ & $5.69856\cdot 10^{-4}$&$ 1.14123\cdot 10^{-4}$\\

$10^9$ & $5.70012\cdot 10^{-5}$&$ 1.14124\cdot 10^{-5}$\\

$10^{10}$ & $5.70028\cdot 10^{-6}$&$ 1.14124\cdot 10^{-6}$ \\
\hline
\end{tabular}
\caption{Table corresponds to Fig \ref{Comp_8}}
\label{table:8}
\end{table}
\fi

\section{Conclusions}

In this paper, we considered approximation methods of a fluid flow around an obstacle, using an approach based on penalization of the fluid motion inside the obstacle $\Os$ by means of either very large viscosity or friction (with corresponding penalty parameters $m$, $n$, respectively). 

Restricting ourselves to the case when the obstacle touches the boundary of the domain~$\Omega$ we proved that, under certain regularity assumptions, viscosity penalization converges at a linear rate inside the obstacle --- a result which our numerical experiments suggest is sharp. We also obtained bounds on the convergence speed in the mixed penalization case (i.e., when both volumetric and  viscosity terms are present) which, in the special case $m=n$ reduces to an optimal, linear rate on~$\Os$. Thanks to the aforementioned assumption, which stops the flow on a part of $\partial\Os$, this bound also compares favorably to \cite{angot2} and \cite{jorge}. 

On the other hand, our other convergence rate bounds --- in particular, those on entire $\Omega$ --- seem suboptimal: from computer simulations it follows that the error is roughly inversely proportional to $\max\{m, C n\}$ for some positive constant~$C$.
Interestingly, they also indicate that the convergence \emph{rate} is not influenced by the shape of the obstacle.
In addition, experiments show that for the same value of penalty parameter, viscosity penalization leads, in general, to smaller approximation errors (both in $\Os$ and $\Omega$) than the corresponding volume penalization. 

The overall picture significantly changes when the obstacle is fully immersed, so that it does not touch the boundary of~$\Omega$. The volume penalty parameter $n$ then becomes the leading force reducing the error, while the influence of the viscosity penalty $m$ is minor and limited by the value of~$n$.

\subsubsection*{Acknowledgements} The third (PBM) and fourth (TP) author have been partly supported by the Narodowe Centrum Nauki (NCN) grant No 2022/45/B/ST1/03432 (OPUS).


\subsubsection*{Statements and Declarations}
The authors have no relevant financial or non-financial interests to disclose

\end{document}